\newtheorem{theorem}{Theorem}[section]
\newtheorem{lemma}[theorem]{Lemma}
\theoremstyle{definition}
\newtheorem{example}[theorem]{Example}
\newtheorem{definitions and remarks}[theorem]{Definitions and Remarks}
\newtheorem{question}[theorem]{Question}
\theoremstyle{remark}
\newtheorem{remark}[theorem]{Remark}
\numberwithin{equation}{section}
\newcommand{\bdry}{\mathrm{bdry}\,}
\newcommand{\interior}{\mathrm{int}\,}
\newcommand{\Sing}{\mathrm{Sing}\,}
\newcommand{\id}{\mathrm{id}}
\newcommand{\de}{{\delta}}
\newcommand{\ga}{{\gamma}}
\newcommand{\Ga}{{\Gamma}}
\newcommand{\la}{{\lambda}}
\newcommand{\s}{{\sigma}}
\newcommand{\vp}{{\varphi}}
\newcommand{\io}{{\iota}}
\newcommand{\IR}{{\mathbb R}}
\newcommand{\IZ}{{\mathbb Z}}
\newcommand{\cC}{{\mathcal C}}
\newcommand{\cI}{{\mathcal I}}
\newcommand{\cO}{{\mathcal O}}
\newcommand{\cP}{{\mathcal P}}
\newcommand{\wC}{{\widehat C}}
\newcommand{\wQ}{{\widehat Q}}
\newcommand{\RN}[1]{%
  \textup{\uppercase\expandafter{\romannumeral#1}}%
}
\begin{document}
\title[Global smoothing of a subanalytic set]
{Global smoothing of a subanalytic set}

\author[E.~Bierstone]{Edward Bierstone}
\address{University of Toronto, Department of Mathematics, 40 St. George Street,
Toronto, ON, Canada M5S 2E4}
\email[]{bierston@math.toronto.edu}
\author[A.~Parusi\'nski]{Adam Parusi\'nski}
\address{Universit\'e Nice Sophia Antipolis, 
CNRS, LJAD, UMR 7351, 06108 Nice, France}
\email[]{adam.parusinski@unice.fr}
\thanks{Research supported in part by NSERC grant OGP0009070.}

\subjclass{Primary 14B25, 32B20, 32S45; Secondary 14E15, 14P10, 14P15, 32C05}

\keywords{semialgebraic set, subanalytic set, local flattener, resolution of singularities} 

\begin{abstract}
We give rather simple answers to two long-standing
questions in real-analytic geometry, on global smoothing of a subanalytic set,
and on transformation of a proper real-analytic
mapping to a mapping with equidimensional fibres by global blowings-up of the target.
These questions are related: a positive answer to the second can be used to reduce
the first to the simpler semianalytic case. We show that the second question has a
negative answer, in general, and that the first problem nevertheless has a positive solution.
\end{abstract}

\date{\today}
\maketitle
\setcounter{tocdepth}{1}
\tableofcontents

\section{Introduction}\label{sec:intro}  
Semialgebraic and subanalytic sets have become ubiquitous in mathematics since
their introduction by {\L}ojasiewicz in the 1960s \cite{Loj}, following the celebrated Tarski-Seidenberg
theorem on quantifier elimination. In this article, we give rather simple answers to two
long-standing questions in real-analytic geometry, on global smoothing of a subanalytic set
(an analogue of resolution of singularities), and on transformation of a proper real-analytic
mapping to a mapping with locally equidimensional fibres by global blowings-up of the target
(a classical result of Hironaka in the complex-analytic case \cite{HiroAMJ}).

These questions are related: a positive answer to the second can be used to reduce
the first to the simpler semianalytic case. We show that the second question has a
negative answer, in general, and that the first problem nevertheless has a positive solution.
We are grateful to Masaki Kashiwara for his inquiries and suggestions about the global
smoothing problem.

\subsection{Global smoothing}\label{subsec:smoothing} Throughout the article, all spaces
and mappings are assumed to be defined over the field of real numbers, unless stated otherwise.
The results stated in this subsection will be proved in Section 2 below.

\begin{theorem}[Non-embedded global smoothing]\label{thm:nonemb}
Let $V$ be an analytic manifold of dimenson $n$, and let $X$ denote a closed subanalytic
subset of $V$, $\dim X =k$. Then there is a proper analytic mapping $\vp: X' \to V$, where
$X'$ is an analytic manifold of pure dimension $k$, and a smooth open subanalytic subset 
$U$ of $X$, where $\dim X{\backslash}U < k$, such that:
\begin{enumerate}
\item $\vp(X') \subset X$;
\item $\vp^{-1}(X{\backslash}U)$ is a simple normal crossings hypersurface $B' \subset X'$;
\item for each connected component $W$ of $U$, $\vp^{-1}(W)$ is a finite union of 
subsets open and closed in $\vp^{-1}(U)$, each mapped isomorphically onto $W$ by $\vp$.
\end{enumerate}
\end{theorem}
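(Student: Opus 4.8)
The plan is to deduce the theorem from three classical tools of real-analytic geometry --- uniformization of subanalytic sets, resolution of singularities, and subanalytic triangulation --- the only genuinely new ingredient being a cut-and-paste step that trivializes a covering. I will work throughout with $X_{(k)} := \overline{\mathrm{reg}_k(X)}$, the closure of the set of points at which $X$ is, locally, a $k$-dimensional analytic submanifold of $V$. By standard properties of subanalytic sets, $X_{(k)}$ is a closed subanalytic subset of $V$ of pure dimension $k$, it is contained in $X$ (since $X$ is closed), and $\dim(X\setminus X_{(k)})<k$; as the mapping to be constructed will have image in $X_{(k)}$, only $X_{(k)}\setminus U$ will be visible in the conclusion. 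By the uniformization theorem of Hironaka and Bierstone--Milman there are an analytic manifold $M$ of pure dimension $k$ and a proper analytic map $\sigma\colon M\to V$ with $\sigma(M)=X_{(k)}$, which one may take to be generically finite (no connected component of $M$ is collapsed). Applying Sard's theorem to $\sigma$ over $\mathrm{reg}_k(X)$, and using that a subanalytic subset of a $k$-manifold with empty interior has dimension $<k$, the set $\Omega\subseteq\mathrm{reg}_k(X)$ of points over which $\sigma$ restricts to a submersion of $k$-manifolds is open subanalytic with $\dim(X_{(k)}\setminus\Omega)<k$; being a proper local analytic isomorphism, $\sigma\colon\sigma^{-1}(\Omega)\to\Omega$ is a finite analytic covering.

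The covering $\sigma^{-1}(\Omega)\to\Omega$ need not be trivial over the connected components of $\Omega$, whereas condition (3) demands precisely that, so I would shrink $\Omega$. Choose a locally finite subanalytic triangulation of the locally closed subanalytic $k$-manifold $\Omega$ and let $U$ be $\Omega$ with its $(k-1)$-skeleton deleted, that is, the union of the open top-dimensional simplices. Then $U$ is open in $\Omega$, hence open in $X$; $\dim(\Omega\setminus U)<k$; and each connected component $W$ of $U$ is an open $k$-simplex, so it is contractible and the covering $\sigma^{-1}(W)\to W$ is trivial --- $\sigma^{-1}(W)$ is a finite disjoint union of subsets, each carried isomorphically onto $W$ by $\sigma$. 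Put $S:=X_{(k)}\setminus U$; since $S\subseteq(X_{(k)}\setminus\Omega)\cup(\Omega\setminus U)$ it is a closed subanalytic subset of $X_{(k)}$ with $\dim S<k$, the set $U\subseteq X_{(k)}$ is a smooth open subanalytic subset of $X$, and $\dim(X\setminus U)\le\max\bigl(\dim(X\setminus X_{(k)}),\dim S\bigr)<k$.

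Finally set $Z:=\sigma^{-1}(S)=M\setminus\sigma^{-1}(U)$, a closed subanalytic subset of the manifold $M$; it is nowhere dense ($\sigma^{-1}(U)$ is dense because $U$ is dense in $X_{(k)}$ and $\sigma$ is generically finite) and $Z\ne M$ since $\sigma^{-1}(U)\ne\emptyset$. By resolution of singularities in the subanalytic category there is a proper analytic map $\pi\colon M'\to M$, a locally finite composition of blowings-up with smooth centres, which is an isomorphism over $M\setminus Z$ and for which $B':=\pi^{-1}(Z)$ is a simple normal crossings hypersurface in the pure $k$-dimensional analytic manifold $M'$. Take $X':=M'$ and $\varphi:=\sigma\circ\pi\colon X'\to V$, a proper analytic map. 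Now (1) holds because $\varphi(X')=\sigma(M)=X_{(k)}\subseteq X$; (2) holds because $\varphi(X')\subseteq X_{(k)}$ forces $\varphi^{-1}(X\setminus U)=\varphi^{-1}(X_{(k)}\setminus U)=\varphi^{-1}(S)=\pi^{-1}(Z)=B'$; and (3) holds because $\pi$ is an isomorphism over $M\setminus Z\supseteq\sigma^{-1}(U)$, whence $\varphi^{-1}(U)=\pi^{-1}(\sigma^{-1}(U))$ is carried isomorphically onto $\sigma^{-1}(U)$ over $U$, so for each component $W$ of $U$ the splitting of $\sigma^{-1}(W)$ into sheets pulls back to a splitting of $\varphi^{-1}(W)$ into finitely many sets, open and closed in $\varphi^{-1}(U)$, each mapped isomorphically onto $W$ by $\varphi$.

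The conceptually decisive step --- the one that lets us bypass the (false in general) ``equidimensionalization by blowings-up of the target'' route --- is the observation that condition (3) asks not merely that $\sigma$ be \'{e}tale over the smooth locus of $X$ but that the induced covering be \emph{trivial} on each connected component, and that this can be enforced at no cost to the bound on $\dim(X\setminus U)$ simply by excising the codimension-one skeleton of a triangulation. The technical point most in need of care is the coordination of the resolution $\pi$ with the set $Z$: one must arrange the uniformization so that no component of $M$ collapses, so that $Z$ is nowhere dense and the resolution genuinely produces a simple normal crossings hypersurface (rather than all of $M'$) while remaining an isomorphism over its complement.
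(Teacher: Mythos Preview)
Your overall strategy---uniformize, trivialize the resulting covering by deleting a skeleton, then transform the preimage of the complement into a normal-crossings divisor---is natural, but the final step hides a genuine gap. You invoke ``resolution of singularities in the subanalytic category'' to obtain a composition of blowings-up $\pi\colon M'\to M$ with smooth centres, an \emph{isomorphism over $M\setminus Z$}, such that $B':=\pi^{-1}(Z)$ is a simple normal crossings hypersurface. No such result holds for a general closed subanalytic subset $Z$ of positive codimension, and your $Z$ is of this bad type. Concretely, take $Z$ to be a closed segment in $\IR^2$: the only admissible centres are points over $Z$, and after any such blowing-up the strict transform of $Z$ still meets the exceptional curve as a half-branch, so $\pi^{-1}(Z)$ is never locally a union of coordinate hyperplanes (the configuration ``full line plus half-line'' persists under further point-blowings-up). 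The same obstruction hits your $Z$: over $\Omega$ the map $\s$ is a local isomorphism, so $\s^{-1}(\Omega\setminus U)$ inherits every half-face of the $(k-1)$-skeleton of the triangulation. Embedded resolution by blowings-up applies to closed \emph{analytic} subsets; producing a global normal-crossings preimage for a \emph{subanalytic} set is exactly the content of the smoothing theorems this paper is proving.

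The paper's route is different. It first proves the embedded statement (Theorem~\ref{thm:emb}) by partitioning $V$ into semianalytic $n$-cells, applying the $t_i^2=f_i$ device of Lemma~\ref{lem:cell} on each cell, and inducting on the depth of Hironaka's semianalytic covering (Lemma~\ref{lem:flat}); Theorem~\ref{thm:nonemb} then follows in a few lines. The essential point is that Lemma~\ref{lem:cell} yields a map that is \emph{finite-to-one}, not an isomorphism, off the cell boundary---and Example~\ref{ex:finite} shows this is unavoidable. Your argument could in principle be salvaged by allowing $\pi$ to be merely a finite covering off $Z$ (condition~(3) would survive because each component $W$ of $U$ is contractible), but the existence of such a $\pi$ with $\pi^{-1}(Z)$ simple normal crossings is, up to bookkeeping, the paper's Theorem~\ref{thm:emb} applied to $Z\subset M$; it is not a tool you may import for free.
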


There is an analogous semialgebraic version of Theorem \ref{thm:nonemb}. Condition (3)
of the theorem is an analogue for subanalytic (or semialgebraic) sets of the bimeromorphic
(or birational) property of resolution of singularities. The example of a closed half-line in $\IR$
shows that the finite-to-one property in (3) is needed. The fact that $U$ is not required to be 
the entire $k$-dimensional smooth part of $X$ in
Theorem \ref{thm:nonemb} means there is freedom in the construction of the mapping $\vp$
that can be exploited to prove the global smoothing result by essentially local means.

\begin{theorem}[Embedded global smoothing]\label{thm:emb}
Let $V$ be an analytic manifold of dimension $n$, and let $X$ denote a closed subanalytic subset of $V$,
 $\dim X = k$. Then there is a proper analytic mapping $\vp: V' \to V$, where $V'$ is an analytic manifold 
 of dimension $n$, together with a smooth closed analytic subset $X' \subset V'$ of dimension $k$, and
 a simple normal-crossings hypersurface $B' \subset V'$ transverse to $X'$ (i.e., the components of $B'$
 are smooth and simultaneously transverse to $X'$), such that:
 \begin{enumerate}
 \item $\dim\varphi(B') < k$;
 \item $\varphi|_{V'\setminus B'}$ is finite-to-one and of constant rank $n$; 
 \item $\varphi$ induces an isomorphism from a union of components of $X'\setminus B'$ to a smooth
 open subanalytic subset $U$ of $X$ such that $\dim X{\setminus}U <k$.
 \end{enumerate}
 \end{theorem}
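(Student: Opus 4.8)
The plan is to reduce the subanalytic smoothing problem to the semianalytic case \emph{locally}, dispose of the semianalytic case by Hironaka's embedded resolution of singularities, and then patch the local constructions together, exploiting the freedom in condition~(3) --- namely that $U$ need not exhaust the $k$-dimensional smooth locus of $X$. At the outset I would replace $X$ by $\overline{X_{\mathrm{reg}}}$, where $X_{\mathrm{reg}}$ is the open set of points near which $X$ is a $C^\omega$ submanifold of pure dimension $k$; since $\dim\bigl(X\setminus\overline{X_{\mathrm{reg}}}\bigr)<k$, this difference is harmless and may be absorbed into $X\setminus U$, so we may assume $X=\overline{X_{\mathrm{reg}}}$ is of pure dimension $k$. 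Over a relatively compact open $V_0\subset V$, the uniformization theorem for subanalytic sets furnishes a proper analytic map $f\colon M\to V_0$ from a smooth manifold $M$ of pure dimension $k$ with $f(M)=X\cap V_0$.

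The key step --- this is where the \emph{local flattener} enters --- is to kill the positive-dimensional fibres of $f$. By Hironaka's flattening theorem, in the form producing a local flattener of $f$ (a finite composition of blowings-up with smooth centres along which the strict transform of $f$ becomes equidimensional over its image), there is a proper analytic modification $\sigma_1\colon V_1\to V_0$ whose centres lie over a subanalytic subset of $V_0$ of dimension $<k$, such that the strict transform $f_1\colon M_1\to V_1$ has fibres of dimension $\dim M_1-\dim f_1(M_1)=0$, i.e.\ finite fibres. Consequently $X_1:=f_1(M_1)=\overline{\sigma_1^{-1}(X_{\mathrm{reg}})}$ is \emph{semianalytic} of pure dimension $k$, since a proper analytic map with finite fibres carries semianalytic sets to semianalytic sets --- finiteness of fibres being exactly what distinguishes the semianalytic from the general subanalytic case. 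This is also the point at which a \emph{global} version of the construction would require a global flattener, i.e.\ a positive answer to the second question of the introduction, which fails; so the flattening is irreducibly local.

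Now $X_1$ is semianalytic, so let $Z_1\supseteq X_1$ be its analytic Zariski closure, a closed analytic subset of $V_1$ of dimension $k$. Applying Hironaka's embedded resolution of singularities (in canonical Bierstone--Milman form, carried out in the presence of the simple normal crossings divisor $E_1$ exceptional for $\sigma_1$) yields a proper analytic $\sigma_2\colon V_2\to V_1$, again a composition of blowings-up with smooth centres, such that: the strict transform $X'\subset V'$ of $X_1$ --- which coincides with that of $Z_1$, since $X_1=\overline{(X_1)_{\mathrm{reg}}}$ --- is a smooth closed analytic submanifold of dimension $k$; the total exceptional divisor $B'$ of $\varphi:=\sigma_1\circ\sigma_2\colon V':=V_2\to V_0$ is a simple normal crossings hypersurface transverse to $X'$ (one arranges, by further blowings-up inside $E_1$, that the strict transform of the semianalytic boundary $\partial X_1$ is contained in $B'$). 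Then $\varphi|_{V'\setminus B'}$ is an isomorphism onto $V_0$ minus a set of dimension $<k$, hence finite-to-one of constant rank $n$; $\dim\varphi(B')<k$; and $\varphi$ maps the union of those components of $X'\setminus B'$ lying over $X_{\mathrm{reg}}$ isomorphically onto a smooth open subanalytic $U\subset X$ with $\dim(X\setminus U)<k$. This proves the statement over $V_0$. (For orientation: when $X$ is a closed half-line in the plane, no flattening is needed and $\varphi$ is simply the blowing-up of the plane at the origin, with $X'$ the strict transform of the whole line and $B'$ the exceptional circle, which $\varphi$ contracts to the origin.)

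The remaining issue --- and the principal obstacle --- is globalization. One cannot simply glue the local data, because the flatteners $\sigma_1$ are not canonical and need not agree on overlaps (this is precisely the content of the negative answer to the second question). Instead I would cover $V$ by a locally finite family of relatively compact charts and treat them successively, at each stage performing the flattening--resolution above \emph{relative to} the modification already constructed over the intersection with the previously treated region, and contracting $U$ by a subanalytic set of dimension $<k$ at each step; canonicity of the resolution step makes that part automatically compatible. The points requiring real work are the existence of such relative local flatteners and the verification that the successive modifications amalgamate into a single proper analytic $\varphi\colon V'\to V$ with $V'$ a manifold and $B'$ a \emph{global} simple normal crossings hypersurface transverse to $X'$, while the discarded set $X\setminus U$, being a locally finite union of sets of dimension $<k$, still has dimension $<k$. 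The standard inputs (uniformization, local flattening, canonical resolution) are classical; this patching is the technical heart of the argument.
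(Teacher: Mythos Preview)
Your local step and your globalization step are both incomplete, and the paper avoids both difficulties by a device you have not considered.

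First, the local flattening result in the real-analytic category does not give you a single proper modification $\sigma_1\colon V_1\to V_0$ as you claim; what one actually gets (this is the paper's Lemma~\ref{lem:flat}) is a \emph{finite family} of compositions of \emph{local} blowings-up $\pi_\lambda\colon V_\lambda\to V$, whose images merely cover a neighbourhood of $X\cap K$. So even over a compact piece, the output is a semianalytic \emph{covering}, not a semianalytic strict transform under one modification. Your subsequent resolution step is therefore being applied to an object that does not exist in the form you describe.

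Second, and more seriously, you correctly identify the patching of local flatteners as ``the technical heart,'' but you do not carry it out, and the paper's own Section~\ref{sec:ex} shows that the kind of compatible amalgamation you are hoping for cannot in general be achieved by successive global blowings-up. The paper sidesteps this entirely: it never glues. Instead it \emph{partitions} $V$ into semianalytic $n$-cells $C$ in general position with respect to $X$ (Lemma~\ref{lem:partition}), builds the required map separately over each cell, and takes $V'$ to be the \emph{disjoint union} of the pieces. The boundary of each cell is handled by the trick of Lemma~\ref{lem:cell}: one passes to the branched cover $\{t_i^2=f_i(x)\}$, which turns the closed cell into the image of a compact manifold, with the cell boundary becoming a normal-crossings divisor. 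This is precisely why condition~(2) reads ``finite-to-one'' rather than ``isomorphism,'' and why the theorem is false with the stronger condition (Example~\ref{ex:finite}). The reduction from subanalytic to semianalytic is then an induction on the \emph{depth} of the semianalytic covering $\{\pi_\lambda\}$, again cell by cell, using fibre products of the cell-smoothing map with the first local blowing-up in each tower.

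In short: your outline points at the right ingredients (local flattening, embedded resolution), but the architecture---successive relative modifications glued on overlaps---is the wrong one, and the missing idea is the partition-into-cells plus disjoint-union construction, together with the $t_i^2=f_i$ compactification of each cell.
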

 
 The union in (3) is necessarily finite if $X$ is compact; in general, $X$ itself may have infinitely
 many components. The following example shows that the finite-to-one
 property (2) is again needed. In the case that $X$ is a closed semialgebraic subset of $\IR^n$, there 
 is an analogue of Theorem \ref{thm:emb} where the mapping in 
 (2) is one-to-one (see Remark \ref{rem:embsemialg}).
 
 \begin{example}\label{ex:finite}
 Let 
 \begin{equation}\label{eq:sine}
 g(x) := \sin\left(\frac{1}{\de x - 1/\pi}\right),
 \end{equation}
 where $\de > 0$ is a constant. Then $g(x)$ is analytic on the open interval 
 $(-\infty, 1/\de \pi)$. Let $X = \{(x,y)\in \IR^2: y = g(x),\, x \leq 1/2\de\pi\}$. A mapping
 $\vp: V' \to \IR^2$ as in Theorem \ref{thm:emb} must be at least two-to-one on $V'\setminus B'$.
 (Otherwise, the image of $X'$ would provide an extension of $X$ to a closed analytic curve in $\IR^2$.)
 \end{example}
 
 We believe that Theorems \ref{thm:nonemb} and \ref{thm:emb} are not, in general,
 true with the stronger condition that $U$ is the entire smooth part of $X$ of dimension $k$, but we do not have
 a counterexample. The following example in the algebraic case is illustrative.
 
 \begin{example}\label{ex:smoothpart} 
Let $X$ be the algebraic subset of $\IR^3$ defined by $z^4 = x^3 + wxz^2$ (cf.\,\cite[Rmk.\,\allowbreak7.3]{BMihes});
$X$ can be obtained as a blowing-down ($u=x/z$) of the smooth hypersurface $X' \subset \IR^3$ given by
$z = u^3 + uw$. The smooth part of $X$ (as an algebraic \emph{set}) is the complement in $X$ of the half-line
$\{x=z=0,\,w\leq 0\}$. The blowing-up $\varphi: X'\to X$ satisfies Theorem \ref{thm:nonemb} with
$U =$ complement in $X$ of the $w$-axis, but the inverse image of
$\Sing X$ in $X'$ is a ``T-shaped'' set including only the non-positive w-axis.
We can get a mapping as in Theorem \ref{thm:nonemb}, where $U$ is the entire
smooth part of $X$, by following the blowing-up with an additional (generically) $2$-to-$1$ covering.
\end{example}
 
 \subsection{Simplification of an analytic morphism}\label{subsec:equidim}
 Let $\vp: Y \to Z$ denote a proper morphism of \emph{analytic spaces}. We say that $\vp$
 is \emph{finite} if, for every $a\in Y$, the local ring $\cO_{Y,a}$ is a finite $\cO_{Z,\vp(a)}$-module,
 via the ring homomorphism $\vp^*: \cO_{Z,\vp(a)} \to \cO_{Y,a}$. If $\vp$ is finite, then $\vp(Y)$
 is a closed semianalytic subset of $Z$ \cite[Lemma 7.3.6]{HiroPisa}.
 
 Let $\s: Z' \to Z$ denote a morphism given as a composite of blowings-up (more precisely, 
 for every relatively compact open subset $V$ of $Z$, $\s|_{\s^{-1}(V)}:  \s^{-1}(V) \to V$ is the
 composite of a finite sequence of blowings-up over $V$). Given a proper morphism $\vp: Y \to Z$,
 let $\Phi: Y\times_Z Z' \to Z'$ denote the canonical morphism from the fibre-product.
 There is an induced morphism $\vp': Y' \to Z'$, where $Y'$ denotes the smallest closed analytic
 subspace of 
 $Y\times_Z Z'$ containing $Y\times_Z Z'\setminus \Phi^{-1}(B')$, where $B'\subset Z'$ is the exceptional
 divisor of $\s$ (i.e, the critical set of $\s$, in the case that $Z$ is smooth). The morphism
 $\vp'$ is called the \emph{strict transform} of $\vp$.
\[
\begin{tikzcd}
Y' \arrow{rd}{\vp'} \arrow[hookrightarrow]{r} & Y\times_Z Z' \arrow{d}{\Phi} \arrow{r} & Y\arrow{d}{\vp}\\
& Z' \arrow{r}{\s} & Z
\end{tikzcd}
\]

If $\s$ is a blowing-up with centre $C\subset Z$, then $Y'\to Y$ can be identified with
the blowing-up of the pull-back ideal $\vp^*(\cI_C)$ (where $\cI_C\subset\cO_Z$ is
the ideal of $C$, and $\vp^*(\cI_C)\subset \cO_Y$ denotes the coherent ideal generated by
the pull-backs of all local sections of $\cI_C$). This follows essentially from the definitions;
cf. \cite[Chapt.\,4]{HiroPisa}.

\begin{question}\label{qu:equidim}
Given $\vp: Y \to Z$, can we find a composite of blowings-up $\s: Z' \to Z$ such that
$\vp'$ has fibres that are equidimensional in some neighbourhood of every point of $Y'$?
\end{question}

Any closed subanalytic subset $X$ of $Z$ is the image of a proper morphism $\vp: Y\to Z$ with fibres
that generically are finite \cite[Ch.\,7]{HiroPisa}, \cite[Thm.\,0.1]{BMihes}, so a positive answer would
provide a composite of blowings-up $\s$ such that $X' := \vp'(Y')$ is semianalytic (cf. Lemma \ref{lem:flat} below).
In Section \ref{sec:ex} below, we will use the function \eqref{eq:sine} to construct
examples showing that the answer to Question \ref{qu:equidim} is \emph{no}, in general.

\begin{remark}
In the complex-analytic case, the answer is \emph{yes} and, in fact, there is a stronger
result due to Hironaka \cite{HiroAMJ}: $\vp$ can be transformed to a flat morphism by
a composite of blowings-up $\s$. Hironaka's proof is based on successively blowing up
\emph{local flatteners} of the morphism. Remarkably, Hironaka shows that $\vp$ can
be flattened by global blowings-up of $X$ although a global flattener does not exist,
in general, even in the complex case (cf. \cite[Ch.\,4]{HiroPisa}). Equidimensionality of
fibres as a substitute for the stronger flatness condition is studied in \cite{P}.
\end{remark}

As a final remark \ref{rem:final}, we note that a construction similar to that in
Examples \ref{ex:1}, \ref{ex:2} can be used to show that, in the real-analytic category, it is
not true, in general, that a composite of blowings-up is also a blowing-up. It follows
that a characterization of blow-analytic mappings claimed by Fukui \cite[Section 2]{Fuk} 
is not true as stated.

\section{Global smoothing theorems}\label{sec:smoothing} 

\subsection{Lemma of Hironaka}\label{subsec:hiro}
The proofs of our global
smoothing theorems \ref{thm:nonemb} and \ref{thm:emb} use the following local lemma
due essentially to Hironaka \cite[Prop.\,7.3]{HiroPisa} (see also \cite[Thm.\,A.4.1]{BS}).
The lemma is a consequence of Hironaka's local flattening theorem \cite[Ch.\,4]{HiroPisa},
using resolution of singularities to dominate each blowing-up of a local flattener by a
sequence of blowings-up with smooth centres. We recall that a \emph{local blowing-up}
$\s: V'\to V$ means a composite $V' \to W \hookrightarrow V$, where $W \hookrightarrow V$
is the inclusion of an open subset, and $V' \to W$ is a blowing-up.

\begin{lemma}\label{lem:flat}
Let $V$ be an analytic manifold, and let $X$ denote a closed subanalytic subset of $V$. 
Let $K$ be a compact subset of $V$.
Then there exists a finite collection of analytic mappings $\pi_\la: V_\la \to V$, where each 
$V_\la$ is an analytic manifold of dimension $= \dim V$, and a compact subset $K_\la$ of 
$V_\la$, for each $\la$, with the following properties.
\begin{enumerate}
\item $\cup\,\pi_\la(K_\la)$ is a neighbourhood of $X\cap K$ in $V$.
\item For each $\la$, $\pi_\la$ is the composite of a finite sequence of local blowings-up with 
smooth centres. The union of the inverse images of these centres in $V_\la$ is a closed 
analytic hypersurface $B_\la$ of $V_\la$, so that $\pi_\la$ induces an open embedding 
$V_\la \backslash B_\la \to V$. Moreover, 
$\dim \cup\,\pi_\la(B_\la) < \dim X$.
\item (The closure of) $(\pi_\la)^{-1}(X)\backslash B_\la$ is semianalytic, for every $\la$.
\end{enumerate}
\end{lemma}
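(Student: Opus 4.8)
The plan is to deduce the lemma from Hironaka's local flattening theorem and then to replace the (a priori singular) centres of the flattening blowings-up by smooth ones, using resolution of singularities.

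First, by the uniformization theorem for closed subanalytic sets \cite[Ch.\,7]{HiroPisa}, \cite[Thm.\,0.1]{BMihes}, write $X=f(M)$, where $f\colon M\to V$ is a proper analytic map from an analytic manifold $M$ of pure dimension $k$; since $\dim M=\dim f(M)$, the map $f$ is finite-to-one over a dense open subset of $X$, hence generically flat onto its image. Applying Hironaka's local flattening theorem \cite[Ch.\,4]{HiroPisa} (cf.~\cite[Prop.\,7.3]{HiroPisa}) to $f$ and $K$ produces a finite family of analytic maps $\tau_\la\colon\tilde V_\la\to V$, each a finite composite of local blowings-up, together with compact sets $\tilde K_\la\subset\tilde V_\la$, such that: $\bigcup_\la\tau_\la(\tilde K_\la)$ is a neighbourhood of $X\cap K$; the union $E_\la$ of the inverse images in $\tilde V_\la$ of the successive centres is a closed analytic hypersurface off which $\tau_\la$ is an open embedding into $V$; and the strict transform $\tilde f_\la\colon\tilde M_\la\to\tilde V_\la$ of $f$ is flat onto its image, hence --- being proper and generically finite onto an image of pure dimension $k$ --- \emph{finite}. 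Moreover, the flattening is already trivial over the (open) locus where $X$ is a $k$-dimensional analytic submanifold (there $X$ is semianalytic and one takes $\tau_\la$ to be an inclusion of an open set), so the centres of the remaining $\tau_\la$ may be taken to lie over the complementary set of points of $X$ at which $X$ is not a $k$-dimensional analytic submanifold, which has dimension $<k$; thus $\dim\bigcup_\la\tau_\la(E_\la)<k=\dim X$. (The real-analytic subtleties here --- that $f(M)$ is merely subanalytic, and that $f_*\cO_M$ need not be coherent --- are absorbed into the flattening theorem, whose proof complexifies, flattens complex-analytically, and descends the canonically determined flattening centres to the real points.)

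Since $\tilde f_\la$ is finite, $\tilde f_\la(\tilde M_\la)$ is a closed semianalytic subset of $\tilde V_\la$ \cite[Lemma 7.3.6]{HiroPisa}; and because $\tau_\la$ is an open embedding on $\tilde V_\la\setminus E_\la$ while $X$ is closed in $V$, one checks that $\tau_\la^{-1}(X)\setminus E_\la=\tilde f_\la(\tilde M_\la)\cap(\tilde V_\la\setminus E_\la)$, so this set is semianalytic and relatively closed in $\tilde V_\la\setminus E_\la$. Thus the $\tau_\la$ already satisfy analogues of (1)--(3), except that their centres need not be smooth. To remedy this, write $\tau_\la$ as a tower $\tilde V_\la=W_N\to\dots\to W_0=V$ of blowings-up with centres $C_j\subset W_j$. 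Principalize $\cI_{C_0}$ on $V$ by a composite of blowings-up with smooth centres lying over $C_0$, arranged (by functoriality of resolution) to keep the exceptional divisor with only normal crossings; since the pulled-back ideal then becomes invertible, this composite factors through $W_1\to W_0$. Pull $C_1$ back to the new manifold, principalize its ideal there, and iterate. After $N$ steps one obtains a finite composite $\pi_\la\colon V_\la\to V$ of local blowings-up with smooth centres, with $\dim V_\la=\dim V$ and a simple normal crossings hypersurface $B_\la\subset V_\la$ (the union of the inverse images of these centres) off which $\pi_\la$ is an open embedding into $V$, together with a proper surjective analytic map $p_\la\colon V_\la\to\tilde V_\la$ satisfying $\pi_\la=\tau_\la\circ p_\la$ and $p_\la^{-1}(E_\la)\subset B_\la$.

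It remains to read off (1)--(3). Put $K_\la:=p_\la^{-1}(\tilde K_\la)$, which is compact because $p_\la$ is proper; since $p_\la$ is surjective, $\pi_\la(K_\la)=\tau_\la(\tilde K_\la)$, so $\bigcup_\la\pi_\la(K_\la)$ is a neighbourhood of $X\cap K$, giving (1). Every smooth centre used in building $\pi_\la$ lies over some $C_j$, whose image in $V$ has dimension $<k$, so $\dim\bigcup_\la\pi_\la(B_\la)\le\dim\bigcup_\la\tau_\la(E_\la)<k$, giving (2). Finally, since $p_\la^{-1}(E_\la)\subset B_\la$,
\[
\pi_\la^{-1}(X)\setminus B_\la\;=\;p_\la^{-1}\bigl(\tau_\la^{-1}(X)\setminus E_\la\bigr)\cap(V_\la\setminus B_\la),
\]
the intersection of the preimage, under the analytic map $p_\la$, of the semianalytic set $\tau_\la^{-1}(X)\setminus E_\la$ with the open semianalytic set $V_\la\setminus B_\la$; hence it is semianalytic and, being relatively closed in $V_\la\setminus B_\la$, its closure in $V_\la$ is semianalytic too --- giving (3). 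I expect the main obstacle to be the first step: extracting from Hironaka's flattening theorem a family of local blowings-up whose exceptional loci project to a subset of $X$ of dimension strictly less than $k$, so that only a genuinely lower-dimensional part of $X$ must be sacrificed; granting that, the passage to smooth centres and the verification of (1)--(3) are routine.
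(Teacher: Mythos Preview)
Your proposal is correct and follows precisely the approach the paper indicates: the paper does not give a detailed proof of this lemma but only states that it ``is a consequence of Hironaka's local flattening theorem \cite[Ch.\,4]{HiroPisa}, using resolution of singularities to dominate each blowing-up of a local flattener by a sequence of blowings-up with smooth centres,'' with a pointer to \cite[Prop.\,7.3]{HiroPisa} and \cite[Thm.\,A.4.1]{BS}. Your argument---uniformize $X$, apply local flattening to the resulting proper map so the strict transform becomes finite and hence has semianalytic image, then principalize the (possibly singular) centres by smooth-centre blowings-up and pull back---is exactly a fleshing-out of that sketch, and your own diagnosis of the delicate point (arranging the flattening centres to lie over the non-smooth part of $X$ so that $\dim\bigcup_\la\pi_\la(B_\la)<\dim X$) matches what \cite[Prop.\,7.3]{HiroPisa} actually establishes.
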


Let $p$ denote the longest length of the sequence of local blowings-up involved in $\pi_\la$, 
for any $\la$, in Lemma \ref{lem:flat}. We will call $\{\pi_\la\}$ a \emph{semianalytic covering} 
of $X\cap K$ of \emph{depth} $p$. We will prove Theorem \ref{thm:emb} first in the case
that $X$ is semianalytic, and reduce the subanalytic to the semianalytic case by induction 
on the depth of a semianalytic covering, for suitable $K$.

\subsection{Smoothing of a semianalytic $n$-cell}\label{subsec:cell}
Let $V$ be an analytic manifold of dimension $n$, and let $C$ denote the closure of a relatively compact
open semianalytic subset of $V$. We will say that $C$ is a \emph{semianalytic $n$-cell} if there are
finitely many analytic functions
$f_i$, $i=1,\ldots,q$, defined in a neighbourhood $W$ of $C$, such that
$C= \bigcup_{j=1}^r C_j$, 
where each
$$
C_j = \{x\in W: f_i(x) \geq 0,\, i\in I_j\},\, \text{ for some } I_j \subset \{1,\ldots,q\};
$$
in particular, the boundary of $C$,
$\bdry C \subset \bigcup_i \{f_i(x)=0\}$. Note that the \emph{boundary hypersurfaces} $\{f_i(x)=0\}$
may include interior points of $C$.

\begin{lemma}\label{lem:cell}
Let $C$ denote a semianalytic $n$-cell in $V$, as above. Then there is an analytic mapping $\xi: S \to V$,
where $S$ is a compact analytic manifold of dimension $n$, a simple normal crossings 
hypersurface $D\subset S$, and a dense open semianalytic subset $U$ of $C$, such that
$\xi(S) = C$, $S\backslash D = \xi^{-1}(U) =$ a finite union of open and closed subsets
each projecting isomorphically onto $U$.
\end{lemma}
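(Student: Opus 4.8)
The plan is to resolve the boundary hypersurfaces of the cell $C$ to simple normal crossings by a composite of blowings-up with smooth centres, then argue that over the (open, dense) complement of the exceptional divisor and the strict transforms of the boundary hypersurfaces, the cell $C$ pulls back to a ``topologically trivial'' situation where each connected component of the preimage maps isomorphically to a component of the interior of $C$.

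\medskip

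\textbf{Step 1: Embedded resolution of the boundary hypersurfaces.} By hypothesis there are analytic functions $f_1,\dots,f_q$ on a neighbourhood $W$ of $C$ such that $\bdry C\subset\bigcup_i\{f_i=0\}$, and $C$ is a finite union of ``basic'' sets $C_j=\{f_i\geq 0,\,i\in I_j\}$. Apply embedded resolution of singularities to the (possibly singular, possibly non-reduced) hypersurface $\{f_1\cdots f_q=0\}$ in a relatively compact neighbourhood of $C$: there is a composite of blowings-up with smooth centres $\sigma: \widetilde W\to W$ such that the total transform of $\{f_1\cdots f_q=0\}$, together with the exceptional divisor, is a simple normal crossings hypersurface $D_0\subset\widetilde W$. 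Then let $S$ be obtained by taking a relatively compact piece and, if necessary, compactifying (or one works with the compact set $C$ and the properness is automatic since $\sigma$ is proper over $W$ and $C$ is compact); set $D:=D_0\cap S$. Since $\sigma$ is proper and $\sigma(\widetilde W)\supset C$, we get $\xi(S)=C$ after restricting appropriately, with $\xi:=\sigma|_S$.

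\medskip

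\textbf{Step 2: Trivialization over the complement.} Over $S\setminus D$, all the functions $\xi^*f_i$ are either nowhere zero or identically zero on each connected component (this is the key consequence of the snc form: locally $\xi^*f_i$ is a unit times a monomial in the snc coordinates, and away from $D$ the monomial is a unit, so $\xi^*f_i$ has constant sign on each component of $S\setminus D$). Consequently, on each connected component $S_\alpha$ of $S\setminus D$ the pullback of each basic set $C_j$ is either all of $S_\alpha$ or empty, so $\xi^{-1}(C)\cap S_\alpha$ is either $S_\alpha$ or empty. Since $\xi$ is an isomorphism from $\widetilde W\setminus D_0$ onto an open subset of $W$ (blowings-up with smooth centres are isomorphisms away from the exceptional locus), $\xi$ maps $S\setminus D$ isomorphically onto an open subset $U_0$ of $W\setminus\bigcup_i\{f_i=0\}$; intersecting with the interior of $C$ and letting $U$ be the image of those components $S_\alpha$ on which $\xi^{-1}(C)=S_\alpha$, we obtain that $\xi$ restricts to an isomorphism $\xi^{-1}(U)\to U$ on each component, where $\xi^{-1}(U)$ is exactly the union of those $S_\alpha$. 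To see that $U$ is dense in $C$: the interior of $C$ in $V$ differs from $C$ by a set contained in $\bdry C\subset\bigcup_i\{f_i=0\}$, which is nowhere dense; and $U$ differs from $\interior C$ by the image of $D$ together with points blown up, again a nowhere-dense set since $\dim\xi(D)<n$ (the centres of the blowings-up have dimension $<n$, hence so do their images, and $\xi(D)$ is a finite union of such images together with the original hypersurfaces $\{f_i=0\}\cap C$, all nowhere dense in $C$ because $C$ is the closure of an open set). Finally, $S\setminus D=\xi^{-1}(U)$ after discarding the components mapping outside $C$: since $\xi(S)=C$, every component of $S\setminus D$ maps into $C$, and by the dichotomy above either into $\interior C$ (hence, after the density discussion, we may take $U$ to be precisely $\xi(S\setminus D)\cap\interior C$, which forces $S\setminus D=\xi^{-1}(U)$).

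\medskip

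\textbf{Main obstacle.} The essential content beyond a direct invocation of embedded resolution is the bookkeeping in Step 2: one must check that the monomial normal form of the $\xi^*f_i$ along the snc divisor $D$ genuinely forces the constant-sign (hence ``all or nothing'') behaviour of each basic set $C_j$ on components of $S\setminus D$, and then identify $U$ and verify its density and the equality $S\setminus D=\xi^{-1}(U)$ simultaneously with the ``finite union of open and closed pieces, each an isomorphism'' conclusion. The finiteness of the union of components follows from compactness of $S$. A secondary point is to ensure the construction is genuinely global over the noncompact ambient $V$ before restricting to the compact cell $C$; but since $C$ is compact and $\sigma$ can be taken proper over a relatively compact neighbourhood of $C$, this causes no real difficulty, and the compactified manifold $S$ of the statement is obtained in a standard way.
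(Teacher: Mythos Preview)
Your approach has a genuine gap in Step~1: you do not actually construct a compact analytic manifold $S$ (without boundary) with $\xi(S)=C$. A composite of blowings-up $\sigma:\widetilde W\to W$ is surjective onto $W$, not onto $C$; the preimage $\sigma^{-1}(C)$ is merely a closed semianalytic subset of $\widetilde W$---after your snc resolution it is locally a manifold with corners, defined by monomial inequalities in the snc coordinates, but it is not a closed analytic manifold. Your suggestions of ``taking a relatively compact piece'' or ``compactifying'' do not address this: any open subset of $\widetilde W$ has open image in $W$, never equal to the cell $C$ with its boundary, and there is no compactification on offer that maps exactly onto $C$. The phrase ``$\xi(S)=C$ after restricting appropriately'' is where the argument fails. (Relatedly, your $\xi$ is one-to-one off the exceptional locus, so the ``finite union of sheets'' in the conclusion would always be a single sheet; this is a symptom that no genuine folding over $\bdry C$ has taken place.)

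The paper's proof supplies exactly the missing idea. After reducing to a single basic set $C=\{f_i\geq 0,\ i=1,\dots,q\}$, it defines the compact analytic \emph{variety}
\[
Z = \{(x,t)\in W\times\IR^q : t_i^2 = f_i(x),\ i=1,\dots,q\},
\]
which projects onto $C$ because $t_i^2=f_i(x)$ has a real solution if and only if $f_i(x)\geq 0$. This doubling across each boundary hypersurface is what converts the semianalytic set with boundary into an analytic space without boundary; generically the projection $Z\to C$ is $2^q$-to-one. Resolution of singularities of $Z$ then yields the smooth compact $S$ and the snc divisor $D$. Your embedded resolution of the boundary hypersurfaces could at best serve as a preparatory step (making $\sigma^{-1}(C)$ a manifold with corners), but one still needs a reflection or doubling construction on top of it to close up the boundary---and that is precisely the $t_i^2=f_i$ trick.
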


\begin{proof} We can assume that $\dim C_{j_1}\cap C_{j_2} < n$, for all $j_1 \neq j_2$,
and can thus reduce to the case that $C$ is of the form
$$
C = \{x\in W: f_i(x) \geq 0,\,i=1,\ldots,q\}.
$$

Define
$$
Z := \{(x,t) \in W\times\IR^q: t_i^2 = f_i(x),\, i=1,\ldots,q\},
$$
where $t=(t_1,\ldots,t_q)$. Then $Z$ is a compact analytic subset of $W\times\IR^q$. Let $\pi: Z \to W$ denote the
restriction of the projection $W\times\IR^q \to W$. Then $\pi(Z) = C$. 
Moreover, there is a closed analytic subset $Y$ of $Z$, with $\dim Y < n = \dim Z$,
and an open dense semianalytic subset $U$ of $X$, such that $Z{\backslash}Y = \pi^{-1}(U) =$ a finite union
of open and closed subsets, each projecting isomorphically onto $U$. The result then follows
by composing $\pi$ with a mapping $S\to Z$ given by resolution of singularities of $Y\subset Z$; cf.
\cite[Thm.\,5.10]{HiroPisa}, \cite[Thm.\,1.6]{BMinv}.
\end{proof}

\begin{remark}\label{rem:cell} (1) In the case that $X$ is a semalgebraic subset of $\IR^n$, the same
proof gives an analogue of Lemma \ref{lem:cell} where the mapping $\xi$ is algebraic.

\medskip\noindent
(2) Our proof of Theorem \ref{thm:emb} involves Lemma
\ref{lem:cell} for a covering
of $X$ by semianalytic $n$-cells with disjoint interiors. In the case that $X$ is a compact subanalytic
subset of $V=\IR^n$,
Lemma \ref{lem:cell} is needed only in the case of a cube $C \subset \IR^n$ (see Remark \ref{rem:partition}). In this case,
a smoothing $\xi: S \to V$ can be constructed more efficiently as follows. Suppose that
$C=[-1,1]^n$.
Then the projection $(x,y) \mapsto x$ of the unit circle $S^1$ in $\IR^2$ onto the closed interval $[-1,1] \subset \IR$
induces a real-analytic mapping $\xi: S := (S^1)^n \to \IR^n$ onto $C$, such that $\xi$ induces a $2^n$-sheeted
covering of the open cube $(-1,1)^n$, and the inverse image of the boundary is a simple normal crossings
hypersurface $D$ in $S$.
\end{remark}

\subsection{Partition into semianalytic cells}\label{subsec:partition} Let $V$ denote an analytic manifold
(assumed countable at infinity), $\dim V = n$. A locally finite (hence countable) collection of semianalytic
$n$-cells $C_\la$ in $V$ will be called a \emph{partition} of $V$ into semianalytic $n$-cells if $V = \cup\,C_\la$
and the interiors of the $C_\la$ are disjoint. We will say that such a partition $\cP$ is \emph{subordinate} to
a covering $\cC$ of $V$ by open subsets $W_\mu$ if each $C_\la \in \cP$ lies in some $W_\mu$.
We will say that $\cP$ is \emph{compatible} with a semianalytic subset $Y$ of $V$ if the interior $\interior C_\la$
of every $C_\la \in \cP$ lies in either the interior or exterior of $Y$.

Given a subanalytic subset $X$ of $V$, we will say that a partition into semianalytic $n$-cells $C_\la$ is
\emph{in general position} with respect to $X$ if, for each $\la$, the boundary hypersurfaces $\{f_i(x)=0\}$
of $C_\la$ (see \S\ref{subsec:cell}) can be chosen so that $\dim (X\bigcap\{f_i(x)=0\}) < \dim X$, for all $i$.

\begin{lemma}\label{lem:partition} Let $V$ denote an analytic manifold of dimension $n$, and let $\cC$
be an open covering of $V$.
\begin{enumerate}
\item There exists a (locally finite) partition of $V$ into semianalytic $n$-cells, subordinate to $\cC$.
\item If $Y$ is a semianalytic subset of $V$, then there exists a partition of $V$ into semianalytic cells,
subordinate to $\cC$ and compatible with $Y$.
\item If $X$ is a closed subanalytic subset of $V$, there is a partition into semianalytic cells,
subordinate to $\cC$ and in general position with respect to $X$.
\end{enumerate}
\end{lemma}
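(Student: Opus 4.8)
\textbf{Proof plan for Lemma \ref{lem:partition}.}

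The plan is to build the partition by a standard triangulation-type construction, adapted to the three successive refinements required. First I would reduce everything to a single statement by observing that (1) is the special case of (2) with $Y = \emptyset$, and that (3) will follow from (2) applied to a suitable finite family of semianalytic sets on each relatively compact piece (namely the sets cutting $X$ into strata of lower dimension, produced via the stratification theory for subanalytic sets). So the core is (2): given an open cover $\cC$ of $V$ and a semianalytic $Y\subset V$, produce a locally finite partition of $V$ into semianalytic $n$-cells, each contained in some $W_\mu\in\cC$, with every interior $\interior C_\la$ lying entirely inside $Y$ or entirely outside $Y$.

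The construction goes as follows. Since $V$ is countable at infinity, exhaust $V$ by relatively compact open sets $V_1\subset\subset V_2\subset\subset\cdots$ with $V=\bigcup V_m$, and work on the compact ``shells'' $\overline{V_m}\setminus V_{m-1}$ one at a time, gluing compatibly along the overlaps (this is where local finiteness is secured). On each relatively compact piece, choose a finite subcover from $\cC$ together with a subordinate partition of unity, or more simply choose finitely many analytic charts refining the trace of $\cC$; in each chart, use coordinate cubes of small enough diameter so that each cube lies in some $W_\mu$. The cubes of a fixed grid have disjoint interiors and cover the chart, so they already give a partition into semianalytic $n$-cells (each $C_\la$ a product of closed intervals, hence of the required form with the $f_i$ the affine coordinate functions $x_j - a_j$). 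To achieve compatibility with $Y$, refine: write $Y$ locally in the chart by finitely many analytic inequalities $g_1\geq 0,\ldots,g_N\geq 0$ (possible after further localizing, since $Y$ is semianalytic), and subdivide each cube along the hypersurfaces $\{g_i = 0\}$. The resulting pieces need not be cubes, but by the theory of semianalytic sets each is a finite union of basic semianalytic sets $\{g_i \geq 0,\ i\in I\}\cap C_\la$, which is exactly a semianalytic $n$-cell in the sense of \S\ref{subsec:cell}; and by construction the interior of each such piece is contained in a single connected component of $V\setminus\bigcup_i\{g_i=0\}$, hence lies in $\interior Y$ or in the exterior of $Y$. Shrinking pieces of measure-zero interior away (absorbing lower-dimensional leftovers into neighbouring cells) keeps the interiors disjoint and their union equal to $V$.

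For part (3), on each relatively compact piece apply the stratification of $X$: there is a semianalytic (indeed subanalytic, but we may take semianalytic after localizing) subset $X_1\subset X$ with $\dim X_1 < \dim X$ such that $X\setminus X_1$ is a smooth $k$-dimensional submanifold; choosing local analytic equations whose zero sets contain $X_1$ but do not contain any $k$-dimensional stratum of $X$, and taking these as (part of) the boundary functions $f_i$, we get the general-position condition $\dim(X\cap\{f_i=0\}) < \dim X$. One must also perturb the coordinate hyperplanes of the cubes so that they too meet $X$ in dimension $<\dim X$; this is possible because the bad hyperplane directions form a set of positive codimension in the space of directions, so a generic affine chart works. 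The main obstacle, and the step requiring the most care, is the gluing across the shells $\overline{V_m}\setminus V_{m-1}$: the cells produced on adjacent shells must fit together into a single locally finite partition, which forces one to fix the grid/stratification data on $\partial V_m$ first and then extend inward, rather than choosing the pieces independently on each shell. Once that bookkeeping is arranged, local finiteness and the disjoint-interiors condition are immediate, and subordination to $\cC$ is built in from the choice of small cubes.
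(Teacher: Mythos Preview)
Your overall strategy for (1) and (2)---grids of small cubes in coordinate charts, then subdivision along the local defining hypersurfaces of $Y$---is essentially the paper's approach and is fine. Two points deserve comment.

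\textbf{Gluing.} The paper avoids your shell bookkeeping by a simpler device: it chooses an ordered, locally finite family of ``big cubes'' $\widehat{Q}_\iota$ (in coordinate charts refining $\cC$) whose interiors cover $V$, and then simply sets $Q_\iota := \overline{\interior \widehat{Q}_\iota \setminus \bigcup_{\gamma<\iota} \widehat{Q}_\gamma}$, taking the cells to be the grid cubes inside $\widehat{Q}_\iota$ with the previous $\widehat{Q}_\gamma$ subtracted. After passing to a fine enough grid, these subtracted pieces are semianalytic $n$-cells in the required sense, and the boundary hypersurfaces of each cell come from finitely many cubes. This disposes of the compatibility-across-shells issue without any inductive extension inward from $\partial V_m$.

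\textbf{Part (3).} Your opening claim that (3) follows from (2) applied to a stratifying family is not right: ``general position'' concerns the boundary hypersurfaces of the cells meeting $X$ in dimension $<k$, which is a transversality condition, not a compatibility condition. Your subsequent direct argument mixes an unnecessary (and problematic) piece with the correct one. The problematic piece is the stratification step: the assertion that $X_1$ ``may be taken semianalytic after localizing'' is false for general subanalytic $X$, and in any case adding equations whose zero sets contain $X_1$ among the boundary hypersurfaces does nothing to control $\dim(X\cap\{f_i=0\})$---a hypersurface containing $X_1$ can still meet a $k$-stratum of $X$ in dimension $k$. The correct piece is exactly what the paper does and what you write at the end: make a generic linear change of coordinates in each chart so that every grid hyperplane $\{x_i=j/q_\iota\}$ meeting the relevant big cube intersects $X$ in dimension $<k$. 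That alone gives (3); the stratification detour should be dropped.
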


\begin{proof}
Consider a covering of $V$ by a locally finite (hence countable) collection of analytic coordinate
charts $V_\io$, $\io=1,2,\ldots,$ where each $V_\io$ lies in a member of $\cC$. 
Given $\io$ and a positive integer $q_\io$, let $(x_1,\ldots,x_n)$ 
denote the coordinates of $V_\io$ and consider the
$q_\io$-\emph{grid} of $V$ formed by the hyperplanes $\{x_i = j/q_\io\}$, $j\in\IZ$, $i=1,\ldots,n$.
Let $\wC_{\io\mu}$ denote the closed cubes (of side length $1/q_\io$) determined by the $q_\io$-grid.
Of course, we can choose the covering $\{V_\io\}$ and the $q_\io$ with the property that, for each
$\io$, there is a big closed cube $\wQ_\io \subset V_\io$ with sides determined by the $q_\io$-grid,
such that the interiors $\interior \wQ_\io$ of all $\wQ_\io$ cover $V$; in fact, we can assume that $V$ is covered by smaller
open balls (say, with centre $=$ centre of $\wQ_\io$ and diameter $=$ half the side length of $\wQ_\io$).

Write $Q_1 := \wQ_1$, and $C_{1\mu} := \wC_{1\mu}$, for all $\mu$. For each $\io > 1$, set
\begin{align*}
Q_\io &:= \text{ closure of } \left(\interior \wQ_\io\right) \backslash \bigcap_{\ga < \io} \wQ_\ga,\\
C_{\io\mu} &:= \text{ closure of } \left(\interior \wC_{\io\mu}\right) \backslash \bigcap_{\ga < \io} \wQ_\ga, \text{ for all } \mu.
\end{align*}
Replacing each $q_\io$ by a large enough integral multiple, if necessary, 
we can assume that each $C_{\io\mu}$ is a semianalytic $n$-cell
(in particular, $\bdry C_{\io\mu}$ lies in the union of the zero sets of finitely many analytic functions defined in a
neighbourhood of $C_{\io\mu}$, given by the boundary hypersurfaces of $\wC_{\io\mu}$ and $\wQ_\ga$, $\ga < \io$). 
Then the collection of all cells $C_{\io\mu}$, where
$\wC_{\io\mu} \subset \wQ_\io$, for all $\io$, form a partition of $V$ subordinate to $\{V_\io\}$.
The assertion (1) follows.

Clearly, if $Y$ is a semianalytic subset of $V$, then, after taking a large enough multiple of $q_\io$ above,
each $C_{\io\mu}$ can be partitioned into finitely many cells, each with interior in either the interior or exterior of $Y$,
as required by (2).

Given a closed subanalytic subset $X$ of $V$, $\dim X = k$, we can also assume that, for each $\io$,
every coordinate hyperplane
$\{x_i = j/q_\io\}$ of $V_\io$ intersects $X$ in a subanalytic subset of dimension $< k$ (by a small linear 
coordinate change, if necessary; in fact, it is enough that each hyperplane
$\{x_i = j/q_\io\}$ that intersects $\wQ_\io$ has this property). 
It follows that, for each cell $C_{\io\mu}$ in the resulting
partition, the intersection of
$X$ with every boundary hypersurface $\{f_i(x)=0\}$ has dimension $< k$. This proves (3).
\end{proof}

\begin{remark}\label{rem:partition} The proof of Lemma \ref{lem:partition} shows that, if
$X$ is a compact subanalytic subset of $\IR^n$, then, for any open covering $\cC$ of $\IR^n$,
there is a partition of $\IR^n$ into cubes, subordinate to $\cC$ and in general position with 
respect to $X$.
\end{remark}

\subsection{Proofs of the main theorems}

\begin{proof}[Proof of the embedded global smoothing theorem \ref{thm:emb}]

\noindent
\emph{I. The semianalytic case.} Suppose that $X$ is a closed semianalytic subset of $V$. Then there is a 
locally finite covering of $V$ by open subsets $V_\io$ such that, for each $\io$, there are
closed analytic subsets $Y_\io \subset Z_\io$ of $V_\io$, $\dim Z_i = k$, and an open and closed subset $U_\io$ of 
$Z_\io \backslash Y_\io$, such that $U_\io$ is an open subset of the smooth part of $X$ of dimension $k$ 
and $\dim \left(X\cap V_\io\right){\backslash}U_\io < k$.

By Lemma \ref{lem:partition}, there is a partition $\cP$ of $V$ into semianalytic $n$-cells $C$, subordinate
to $\{V_\io\}$ and in general position with respect to $X$. It is enough to show that, for each $C\in \cP$
such that $C\cap X\neq \emptyset$, there
is a mapping $\vp_C: V'_C \to V$ onto $C$, satisfying the conclusion of the theorem with respect to $X\cap C$. Indeed,
we can then simply let $V'$ be the disjoint union of the $V'_C$ and let $\vp: V' \to V$ be the mapping 
given by $\vp_C$ on each $V'_C$.

Consider such a cell $C$. Choose $\io$ so that $C\subset V_\io$. Take $\xi: S \to V$ 
onto $C$, and $D \subset S$, as in Lemma \ref{lem:cell}. By
resolution of singularities, there exist an analytic manifold $V'_C$ of dimension $n$, a proper analytic surjection $\rho: V'_C \to S$, and
a smooth closed analytic subset $X'$ of $V'_C$ of pure dimension $k$ ($X' =$ strict transform of $Z_\io$), such that
$B' := \rho^{-1}(\xi^{-1}(Y_\io)\cup D)$ is a simple normal crossings
hypersurface in $V_C'$ transverse to $X'$, and $\vp_C := \xi\circ\rho: V'_C \to V$,
together with $X'$ and $B'$, 
satisfy the conclusions of the theorem with respect to $X\cap C$ (see \cite[Thms.\,5.10, 5.11]{HiroPisa},
\cite[Thms.\,1.6, 1.10]{BMinv}).

\medskip\noindent
\emph{II. The general subanalytic case.} Consider a locally finite covering of $V$ by relatively
compact open subsets $V_\io$. By Lemma \ref{lem:flat}, for each $\io$, there is a semianalytic
covering $\{\pi_{\io\la}\}$ of $X\cap \overline{V_\io}$, of depth $p_\io$, say.

Each $\pi_{\io\la}$ is a composite of local blowings-up
$$
\pi_{\io\la} = \pi_{\io\la}^1 \circ \pi_{\io\la}^2 \circ \cdots \circ \pi_{\io\la}^{p(\io,\la)},\quad p(\io,\la)\leq p_\io;
$$
i.e., 
$$
\pi_{\io\la}^i: V_{\io\la}^i \to W_{\io\la}^i \hookrightarrow V_{\io\la}^{i-1},\quad i=1,\ldots,p(\io,\la),
$$
where $W_{\io\la}^i \subset V_{\io\la}^{i-1}$ is an open subset and $\pi_{\io\la}^i: V_{\io\la}^i \to W_{\io\la}^i$ is a
blowing-up with smooth centre ($V_{\io\la}^0 := V$).

By Lemma \ref{lem:partition}, there is a partition $\cP$ of $V$ into semianalytic $n$-cells $C$, subordinate
to $\{V_\io\}$ and in general position with respect to $X$. Let $\cP_X := \{C \in \cP: X\cap C \neq \emptyset\}$.
We can assume that
\begin{enumerate}\item $\cP_X = \bigcup_\io \cP_\io$, where the $\cP_\io$ are disjoint subsets of $\cP_X$
and $Q_\io := \bigcup\{C: C\in\cP_\io\} \subset V_\io$;
\item if $C\in \cP_\io$, then $C\in W_{\io\la}^1$, for some $\la = \la(\io,C)$.
\end{enumerate}
(This is clear, for example, from the construction of $\cP$ in the proof of Lemma \ref{lem:partition}(1),
by taking a large enough multiple of $q_\io$.)

Now, it is enough to prove that, for each $\io$, there is a mapping $\vp_\io: V'_\io \to V$ (onto $Q_\io$) satisfying
the conclusion of the theorem with respect to $X\cap Q_\io$. Fix $\io$. Our proof is by 
induction on the depth $p_\io$ of the semianalytic covering $\{\pi_{\io\la}\}$. The case $p_\io=0$ 
follows from the theorem in the case that $X$ is semianalytic. 

Again, it is enough to prove that, for each $C \in \cP_\io$, there is a mapping $\vp_C: V'_C \to V$ (onto $C$)
satisfying the conclusion of the theorem with respect to $X\cap C$. Fix $C \in \cP_\io$.
Let $B^1$ denote the exceptional divisor of $\pi_{\io\la}^1$, where $\la = \la(\io,C)$,
and let $X^1$ denote the closure in $V_{\io\la}^1$ of $(\pi_{\io\la}^1)^{-1}(X\cap C)\backslash B^1$.

Then $X^1 \subset V_{\io\la}^1$ has a semianalytic covering of depth $< p_\io$.

By induction, there is a proper analytic mapping $\psi: T \to V_{\io\la}^1$, where $T$ is an analytic
manifold of dimension $n$, together with a smooth closed analytic subset $Z$ of $T$, $\dim Z = k$, and a 
simple normal crossings hypersurface $E \subset T$ transverse to $Z$, satisfying the conclusions of the
theorem with respect to $X^1 \subset V_{\io\la}^1$. In particular, $\psi$ induces an isomorphism of a union
of components of $Z\backslash E$ with a smooth open subanalytic subset of $X^1$ whose complement
in $X^1$ has dimension $< k$.

Set $\eta := \pi_{\io\la}^1\circ \psi: T \to V$. Let $\xi: S \to V$ denote an analytic mapping onto $C$,
with a simple normal crossings hypersurface $D \subset S$, satisfying Lemma \ref{lem:cell}.
Consider the fibre product $S \times_V T$ of
$\xi: S \to V$ and $\eta$, and let $\pi_S,\,\pi_T$ denote the projections of $S \times_V T$ to $S,\,T$, respectively.
By resolution of singularities, there is a surjective analytic mapping
$\rho: V_C' \to S \times_V T$, where $V_C'$ is a compact analytic manifold of dimension $n$, such that
the strict transform $X'_C$ of $\pi_T^{-1}(Z)$ is smooth, and the union in $V_C'$ of the
inverse images of $B^1,\, D$ and $E$ is 
a simple normal crossings hypersurface $B'_C$ transverse to $X'_C$. Then the
mapping $\vp_C: V_C' \to V$ given by $\rho$ followed by the projection to $V$ satisfies the conclusions of
the theorem with respect to $X\cap C$, as required. 
\end{proof}

\begin{remark}\label{rem:embsemialg}
In the case that $X$ is a closed semialgebraic subset of $V=\IR^n$, there are global closed algebraic subsets
$Y \subset Z$ of $\IR^n$, where $Z \backslash Y$ is smooth, and an open and closed subset $U$ of 
$Z \backslash Y$, such that $U$ is an open subset of the smooth part of $X$ of dimension $k$,
and $\dim X{\backslash}U < k$ (cf. case I of the proof above). By resolution of singularities, there is a
sequence of blowings-up with smooth algebraic centres over $Y$, after which the strict transform $Z'$ of $Z$
is smooth, and the inverse image of $Y$ is a simple normal crossings hypersurface transverse to $Z'$.
We thus get a semialgebraic analogue of Theorem \ref{thm:emb}, where the mapping in condition (2) of the 
theorem is one-to-one.
\end{remark}

\begin{proof}[Proof of the non-embedded smoothing theorem \ref{thm:nonemb}]
By Theorem \ref{thm:emb}, we can assume that $X$ is the closure of an open semianalytic 
subset of $V$. By Lemma \ref{lem:partition}(2), there is a partition of $V$ into semianalytic $n$-cells
compatible with $X$. In particular, $X$ is a locally finite union of semianalytic $n$-cells, so the result
follows from the special case that $X$ is itself a semianalytic $n$-cell---this is the result of Lemma \ref{lem:cell}.
\end{proof}

The semialgebraic version of Theorem \ref{thm:nonemb} can be proved in the same way (see Remark \ref{rem:cell}(1)).

\section{Examples}\label{sec:ex}
We begin with two examples of a proper analytic mapping $\vp: V \to W$, where $V$ is an analytic space
of dimension 3 and $W=\IR^3$, with the property that there is no mapping $\s: W' \to W$ given as the composite of a sequence
of global blowings-up, such that the strict transform $\vp'$ of $\vp$ by $\s$ has all fibres finite (or empty). 
Each of the examples below involves the function \eqref{eq:sine}, where $\de> 0$ is small.  

\begin{example}\label{ex:1}
Let
$S^3 := \{(x,y,z,w)\in \IR^4: x^2+y^2+z^2+w^2=1\}$, and let $C := \{(x,y,z,w)\in S^3: z=0,\, y=g(x)\}$.
If $\de> 0$ is small enough (e.g., $\de \leq 1/3\pi$), then $C$ is a smooth curve.
We define $\vp: V\to W$ as the composite
$$
V \xrightarrow{\pi_C} S^3 \xrightarrow{p} \IR^3 \xhookrightarrow{\io} Z \xrightarrow{\pi_0} W=\IR^3,
$$
where 
\begin{itemize}
\item[$\pi_0$] is the blowing-up of $0\in \IR^3$;
\item[$\io$] is the inclusion of the $z$-coordinate chart, so that $\pi_0\circ\io: (x,y,z) \mapsto (xz, yz, z)$, and
$\{z=0\}$ represents the exceptional divisor $D$ of $\pi_0$ in this chart;
\item[$p$] is induced by the projection $(x,y,z,w) \mapsto (x,y,z)$;
\item[$\pi_C$] is the blowing-up with centre $C$ (so $\pi_C$ has $1$-dimensional fibres over $C$).
\end{itemize}
Note that $p(C) \subset D$. The required property of $\vp$ is a consequence of the fact that $p(C)$ does not extend to a 
closed analytic curve in $Z$.

Indeed, suppose there is a composite 
of global blowings-up, $\s: W' \to W$, such that the strict transform $\vp': V'\to V$ of $\vp$ by $\s$ has all fibres finite.   
Say $\s=\s_1\circ\s_2\circ\cdots\circ\s_k$, where each $\s_j: W_j \to W_{j-1}$ is a blowing-up with smooth centre
$C_{j-1}\subset W_{j-1}$ ($W_0=W$, $W_k=W'$). Then there is a commutative diagram,
\[
\begin{tikzcd}
Z' = \hspace{-3.5em} & Z_k\arrow{d}{\pi_k}\arrow{r}{\s'_k} & Z_{k-1}\arrow{d}{\pi_{k-1}}\arrow{r} &\cdots \arrow{r} & Z_1\arrow{d}{\pi_1}\arrow{r}{\s'_1} & Z_0\arrow{d}{\pi_0} &\hspace{-3.35em}= Z\\
W' = \hspace{-3em} & W_k\arrow{r}{\s_k} & W_{k-1}\arrow{r} &\cdots \arrow{r} & W_1\arrow{r}{\s_1} & W_0 &\hspace{-2.75em}= W
\end{tikzcd}
\]
where each $\s_j'$ is a composite of finitely many blowings-up with smooth centres: This can be proved inductively.
Given $\pi_j: Z_j \to W_j$, let $T_{j+1}\to W_{j+1}$ be the strict transform of $\pi_j$ by $\s_{j+1}$, and let 
$\tau_{j+1}: T_{j+1}\to Z_j$ denote the associated mapping; i.e., $\tau_{j+1}$ is the blowing-up of the
pull-back ideal $\pi_j^*(\cI_{C_j}) \subset \cO_{Z_j}$, where $\cI_{C_j} \subset \cO_{W_j}$ is the ideal of $C_j$
(see \S\ref{subsec:equidim}). By resolution of
singularities, $\tau_{j+1}$ can be dominated by a finite sequence of blowings-up with smooth centres. More precisely, there is a composite 
$\s_{j+1}': Z_{j+1} \to Z_j$ of finitely many blowings-up with smooth centres, which principalizes $\pi_j^*(\cI_{C_j})$, and $\s_{j+1}'$
factors through $T_{j+1}$, by the universal mapping property of the blowing-up $\tau_{j+1}$. So we get $\pi_{j+1}: Z_{j+1} \to T_{j+1} \to W_{j+1}$.

Let $\s' := \s'_1\circ\s'_2\circ\cdots\circ\s'_k:  Z' \to Z$.
Write $\psi := \io\circ p\circ\pi_C$, and  let $\psi': V''\to Z'$ denote the strict transform of $\psi$ by $\s'$.
Since $\vp'$ has all fibres finite, it follows that $\psi'$ has all fibres finite. 
Indeed, by definition, $V'$ and $V''$ are closed subspaces of the fibre products $V\times_W W'$ and
$V\times_Z Z' \subset (V\times_W W')\times _{W'} Z'$, respectively, and moreover, $V'' \subset 
V'\times_{W'}  Z'$.  This means that each fibre of $\psi'$ is a subset of a fibre of 
$\vp'$.

For each $j=0,\ldots,k-1$, let $C_j'\subset Z_j'$ denote the smallest closed analytic subset
containing $\pi_j^{-1}(C_j)\backslash \pi_j^{-1}(D_j)$, where $D_j$ denotes the exceptional divisor of $\s_j$ ($D_0 = D$). Then
$\dim C_j' \leq 1$ ($C_j'$ may be empty.).
The curve 
$\Ga = \{(x,y,z)\in \IR^3: z=0,\,y = g(x),\, x < 1/\de\pi\} \subset Z$ cannot lie entirely in $C_0'$; therefore, it lifts
to a unique curve $\Ga_1 \subset Z_1$. Likewise, $\Ga_1$ does not lie in $C_1'$, etc. 
(Here we use the property that every subanalytic set containing $\Ga$ is of dimension $\geq 2$; clearly, this property 
is inherited by $\Ga_1$, etc.)  Finally, $\Ga$ lifts to 
a unique curve $\Ga' \subset Z'$, and $\Ga'$ intersects the union of the inverse images of all $\pi_j^{-1}(D_j)$ in a discrete
set. Therefore,
$\psi'$ has one-dimensional fibres over the lifting of a non-empty open subset of 
$p(C)$; a contradiction.
\end{example}

\begin{remark}\label{rem:lift} 
In general, consider a proper analytic  mapping $\vp: V \to W$ which factors through a blowing-up 
$\pi : Z\to W$ of a coherent ideal sheaf in $\cO_W$; i.e., $\vp = \pi \circ \psi$, where $\psi : V\to Z$.  Suppose there is
a composite $\s$ of global blowings-up over $W$ with smooth centres, such that the strict transform 
of $\vp$ by $\s$ has all fibres finite (or empty). Then, by the argument in Example \ref{ex:1}, the strict transform of 
$\psi$ by a composite of such blowings-up over $Z$ also has all fibres finite.
\end{remark}

In Example \ref{ex:1}, we can replace $W$ by an arbitrarily small open ball in $\IR^3$ centred at the origin, and restricting $\vp$ over such a ball will not change the preceding property. It is true,
however, that $\vp$ can be transformed to a morphism with all fibres finite by blowing up at each step with
centre that is globally defined in some neighbourhood of the image of the corresponding morphism 
(e.g., after the first blowing-up
$\pi_0$, with centre globally defined in a neighbourhood of  the image of $\psi$ containing $p(C)$).
The latter phenomenon does not occur in the following example.

\begin{example}  \label{ex:2}
Let $\IR^4 = \IR^3\times\IR \to  \IR^3$
denote the projection $(x,y,z,w) \mapsto (x,y,z)$, and let $S\subset \IR^4$ denote the algebraic subset defined by
\begin{equation}\label{eq:ex1}
(x^2+z^2)^2(w^4+z^2w^2) - (x^2-z^2)^2 = 0.
\end{equation}
Then $S$ is irreducible, $\{x=z=0\} \subset S$, and $S$ maps onto $\IR^3$ since we can solve \eqref{eq:ex1} for
$w^2$ when $x^2+z^2 \neq 0$. (The closure of $S{\backslash}\{x=z=0\}$ maps properly onto $\IR^3$.) 

Let $\pi_0 : M \to \IR^3$ denote  the blowing-up of the origin $\{x=y=z=0\} \subset \IR^3$. 
Then there is a commutative diagram
\[
\begin{tikzcd}
S' \arrow{d} \arrow[hookrightarrow]{r} & M\times\IR \arrow{d}{\pi_0\times\id}\arrow{r} & M \arrow{d}{\pi_0}\\
S \arrow[hookrightarrow]{r} & \IR^3\times\IR \arrow{r} & \IR^3
\end{tikzcd}
\]
where $S'$ denotes the strict transform of $S$ by the blowing-up $\pi_0\times\id$ of $\IR^3\times\IR$. 
Let $S' \to \IR^3$ denote the induced mapping.

Let $U_z$ denote the $z$-coordinate chart of $\pi_0$, i.e., the chart with coordinates $(X,Y,Z)$ in which $\pi_0$ is given by 
$(x,y,z) = (XZ,YZ,Z)$. The mapping $U_z\times\IR \to \IR^3$ given by the diagram above is $(X,Y,Z,w) \mapsto (XZ,YZ,Z)$, and $S'$ is defined in $U_z\times\IR$ by the equation 
\[
(X^2+1)^2(w^4 + Z^2w^2) - (X^2-1)^2 =0.
\]

Setting $Z=0$, this equation splits as
\[
((X^2+1)w^2 - (X^2-1))((X^2+1)w^2 + (X^2-1)) = 0.
\]
Let $C\subset S'$ denote the compact smooth curve defined by
\[
Z=0,\quad Y=g(X),\quad (X^2+1)w^2 + (X^2-1) = 0,
\]
and let $\pi_C: V \to S'$ denote the blowing-up of $S'$ with centre $C$. Then
the mapping 
\[
\vp: V \xrightarrow{\pi_C} S' \longrightarrow \IR^3 = W
\]
has the required property.  

Indeed, suppose that that strict transform of $\vp$ by the composite of a sequence of global blowings-up
over $W$ has all fibres finite. Let $\psi :V\to M$ be the mapping such that $\vp = \pi_0 \circ \psi$.  
By Remark \ref{rem:lift}, there is 
a composite $\s': M'\to M$ of global blowings-up, such that the strict transform $\psi'$ of $\psi$ by 
$\s'$ has all fibres finite.  Then the curve $\Ga = \{(X,Y,Z)\in \IR^3: Z=0,\, Y= g(X),\, X < 1/\de\pi\} \subset M $  
can be lifted to $M'$, and this leads to a contradiction by the same argument as in Example \ref{ex:1}.
\end{example}  

\begin{remark}\label{rem:final} A construction similar to that in the examples above
can be used to show that, in the real-analytic category, it is not necessarily true that
a composite of blowings-up is also a blowing-up. For example, let $\pi_0: Z_1 \to \IR^4$
be the blowing-up of the origin, and let $H$ denote a projective hyperplane in the exceptional
divisor of $\pi_0$. Let $\pi_H: Z_2 \to Z_1$ denote the blowing-up with centre $H$. Consider
an affine coordinate chart $U_1$ of $Z_1$ with coordinates $(x,y,Z,W)$, where $\{W=0\}$
is the exceptional divisor of $\pi_0$ and $H = \{Z=W=0\}$. Let $U_2$ denote the affine
chart of $Z_2$ over $U_1$ with coordinates $(x,y,z,w)$ such that $\pi_H$ is given on $U_2$ by
$(x,y,Z,W) = (x,y,zw,w)$. Let $\pi_C: V \to Z_2$ denote the blowing up with centre
$$
C = \{(x,y,z,w) \in U_2: w=0,\, x^2+y^2+z^2=1,\, y=g(x)\},
$$
and set $\vp := \pi_0\circ\pi_H\circ\pi_C$.

We claim that $\vp$ is not the blowing-up of an ideal. Suppose that $\vp$ is the blowing-up
of an ideal $\cI\subset\cO_{\IR^4}$. Then $\psi = \pi_H\circ\pi_C$ is the blowing-up of
$\pi_0^*\cI$; therefore, the set $A = \{b \in Z_1: \dim \psi^{-1}(b) \geq 2\}$ lies in a real analytic curve
(since, for example, $\psi$ admits a proper complexification).
But this is impossible because $A$ contains a non-empty open subset of $\{Z=W=0,\, y=g(x)\}$.  
\end{remark}

\bibliographystyle{amsplain}

\end{document}